\definecolor{vert}{rgb}{0,0.6,0}
\definecolor{vert}{rgb}{0,0.6,0}
\numberwithin{figure}{section}
\theoremstyle{plain}
\newtheorem{thm}{Theorem}[section]
\newtheorem{defn}{Definition}
\newtheorem{lem}[thm]{Lemma}
\newtheorem{cor}[thm]{Corollary}
\theoremstyle{remark}
\newtheorem{rem}{\bf{Remark}}
\numberwithin{equation}{section}
\newcommand{\R}{\mathbb{R}}
\newcommand{\T}{\mathbb{T}}
\newcommand{\cF}{\mathcal{F}}
\newcommand{\cM}{\mathcal{M}}
\newcommand{\cP}{\mathcal{P}}
\newcommand{\AC}{{\rm AC\,}}
\newcommand{\Li}{L^{\infty}}
\newcommand{\Lip}{{\rm Lip\,}}
\newcommand{\gam}{\gamma}
\newcommand{\del}{\delta}
\newcommand{\ep}{\varepsilon}
\newcommand{\sig}{\sigma}
\newcommand{\Del}{\Delta}
\newcommand{\ol}{\overline}
\newcommand{\tr}{{\rm tr}\,}
\begin{document}

\title[Uniqueness set]
{On uniqueness sets of  additive eigenvalue problems and applications}

\author[H. MITAKE, H. V. TRAN]
{Hiroyoshi Mitake and Hung V. Tran}

\thanks{
The work of HM was partially supported by the JSPS grants: KAKENHI \#15K17574, \#26287024,
\#16H03948, and the work of HT was partially supported by NSF grants DMS-1615944 and DMS-1664424.
}

\address[H. Mitake]{
Institute for Sustainable Sciences and Development, 
Hiroshima University 1-4-1 Kagamiyama, Higashi-Hiroshima-shi 739-8527, Japan}
\email{hiroyoshi-mitake@hiroshima-u.ac.jp}

\address[H. V. Tran]
{
Department of Mathematics, 
University of Wisconsin Madison, 480 Lincoln  Drive, Madison, WI 53706, USA}
\email{hung@math.wisc.edu}

\keywords{Uniqueness set; Hamilton--Jacobi equations; Mather measures; Nonlinear adjoint methods}
\subjclass[2010]{
35B40, 
37J50, 
49L25 
}

\maketitle

\begin{abstract}
In this paper, we provide a simple way to find uniqueness sets
for additive eigenvalue problems of first and second order Hamilton--Jacobi equations by using a PDE approach. 
An application in finding the limiting profiles for large time behaviors of first order Hamilton--Jacobi equations is also obtained.
\end{abstract}

\section{Introduction}
Let $\T^n$ be the usual $n$-dimensional torus.
Let the Hamiltonian $H = H(x,p) \in C^2(\T^n \times \R^n)$ be such that
\begin{itemize}
\item[(H1)] for every $x\in \T^n$, $p \mapsto H(x,p)$ is convex,
\item[(H2)]  uniformly for  $x \in \T^n$,
\[
\lim_{|p| \to \infty} \frac{H(x,p)}{|p|}=+\infty 
\quad \text{and} \quad
\lim_{|p| \to \infty} \left( \frac{1}{2} H(x,p)^2  + D_x H(x,p)\cdot p \right)= +\infty.
\]
\end{itemize}
The first order additive eigenvalue  (ergodic) problem corresponding to $H$ is
\[
{\rm (E)} \qquad H(x,Dw) = c \qquad \text{ in } \T^n.
\]
Here, $(w,c) \in C(\T^n) \times \R$ is a pair of unknowns.
It was shown in \cite{LPV} that there exists a unique constant $c\in \R$ such that (E) has a viscosity solution $w\in C(\T^n)$.
We denote by $c$ the ergodic constant of (E).
Without loss of generality, we normalize the ergodic constant $c$ to be zero henceforth.

One of the most interesting points to study (E) is that (E) is not monotone in $w$,
and in general, (E) has many viscosity solutions of different types
(see examples in \cite[Chapter 6]{LMT} for instance).
It is therefore fundamental to understand why this \textit{nonuniqueness phenomenon} appears,
and in particular, to find a \textit{uniqueness set} for (E).
It turns out that this has deep relations to Hamiltonian dynamics and weak KAM theory.
In fact, a uniqueness set for (E) has already been studied in \cite{FaB, FS} in the context of weak KAM theory.

In this short paper, we provide a new and simple way to look at this phenomenon by using PDE techniques.
Some applications and generalizations are also provided.

\subsection{Settings and main results}
We first recall the definition of Mather measures. Consider the following minimization problem
\begin{equation}\label{M-min}
\min_{\mu \in \cF} \iint_{\T^n \times \R^n} L(x,v) \,d\mu(x,v),
\end{equation}
where
$L$ is the Legendre transform of $H$, that is, 
\[
L(x,v)=\sup_{ p \in \R^n} \left( p\cdot v - H(x,p) \right) 
\quad\text{for} \ (x,v) \in \T^n \times \R^n,  
\]
and 
\[
\cF=\left\{ \mu \in \mathcal{P}(\T^n \times \R^n)\,:\, \iint_{\T^n \times \R^n} v\cdot D\phi(x) \,d\mu(x,v) = 0 \ \text{ for all } \phi \in C^1(\T^n) \right\}.
\]
Here, $\mathcal{P}(\T^n \times \R^n)$ is the set of all Radon probability measures on $\T^n \times \R^n$.
Measures belong to $\cF$ are called \textit{holonomic measures} associated with (E).

\begin{defn}[Mather measures]
Let $\widetilde \cM\subset\cF$ be the set of all minimizers of \eqref{M-min}.
Each measure in $\widetilde \cM$ is called a Mather measure.
\end{defn}

As we normalize $c=0$, we actually have that (see \cite{M, Man, FaB, FS} for instance)
\begin{equation}\label{min-0}
\min_{\mu \in \cF} \iint_{\T^n \times \R^n} L(x,v) \,d\mu(x,v)=-c=0.
\end{equation}
See \cite{MT6}, \cite[Lemma 6.12]{LMT} for a proof of a more general version this fact.
Here is our first main result.
\begin{thm}\label{thm:uniqueness}
Assume {\rm (H1)--(H2)}.
Let $w_1, w_2$ be two viscosity solutions of ergodic problem {\rm (E)}.
Assume further that 
\begin{equation*}\label{con-unique}
\iint_{\T^n \times \R^n} w_1(x) \,d\mu(x,v) \leq \iint_{\T^n \times \R^n} w_2(x)\,d\mu(x,v) \quad \text{ for all } \mu\in \widetilde \cM.
\end{equation*}
Then $w_1 \leq w_2$ in $\T^n$.
\end{thm}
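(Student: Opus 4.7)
My plan is to apply the nonlinear adjoint method of Evans. For each fixed $x_0 \in \T^n$, I will construct a Mather measure $\mu_{x_0} \in \widetilde{\cM}$ and a representation-type inequality
\[
(w_1 - w_2)(x_0) \le \iint_{\T^n\times\R^n}(w_1(x) - w_2(x))\, d\mu_{x_0}(x,v),
\]
at which point the standing hypothesis forces the right-hand side to be non-positive, giving $w_1(x_0) \le w_2(x_0)$. As $x_0 \in \T^n$ is arbitrary, this yields $w_1 \le w_2$ on $\T^n$.

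The first step is to regularize via vanishing viscosity: for $\epsilon \in (0,1)$, I would solve $H(x, Du^\epsilon) - \epsilon \Delta u^\epsilon = c^\epsilon$ in $\T^n$, obtaining a smooth $u^\epsilon$ and $c^\epsilon \to 0$, where assumption (H2) yields uniform Lipschitz bounds. Fixing $x_0 \in \T^n$, I introduce the associated linear adjoint
\[
-\operatorname{div}\bigl(D_p H(x, Du^\epsilon)\,\sigma^\epsilon\bigr) - \epsilon \Delta \sigma^\epsilon = \delta_{x_0} - 1 \quad \text{in } \T^n,
\]
normalized so that $\int_{\T^n}\sigma^\epsilon\, dx = 1$. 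A maximum-principle argument for the adjoint operator gives $\sigma^\epsilon \ge 0$, and I then push $\sigma^\epsilon$ forward through the map $x \mapsto (x, D_p H(x, Du^\epsilon(x)))$ to obtain a probability measure $\mu^\epsilon$ on $\T^n \times \R^n$.

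The core of the proof is to show that, along a subsequence, $\mu^\epsilon \rightharpoonup \mu_{x_0} \in \widetilde{\cM}$ and to derive the representation inequality for arbitrary viscosity solutions. The second coercivity condition in (H2) enables a Bernstein-type estimate of the form $\epsilon \int_{\T^n}|D^2 u^\epsilon|^2 \sigma^\epsilon\, dx \le C$, which provides tightness of $\{\mu^\epsilon\}$ and controls all regularization errors. Testing the viscous equation against $\sigma^\epsilon$ and passing to the limit produces the minimization identity $\iint L\, d\mu_{x_0} = 0$. To apply the adjoint identity to the non-smooth $w_i$, I mollify $w_i^\delta := \rho_\delta * w_i$, test the adjoint identity against $w_i^\delta$ to obtain
\[
w_i^\delta(x_0) - \int w_i^\delta\, dx = \int\bigl(Dw_i^\delta \cdot D_p H(x, Du^\epsilon) - \epsilon \Delta w_i^\delta\bigr)\sigma^\epsilon\, dx,
\]
and then bound the right-hand side via the Legendre--Fenchel inequality $p\cdot v \le L(x,v) + H(x,p)$, applied with $p=Dw_i^\delta$ and $v = D_p H(x, Du^\epsilon)$, together with the approximate subsolution property $H(x, Dw_i^\delta) \le \omega(\delta)$ guaranteed by (H1) and mollification. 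Letting $\delta, \epsilon \to 0$ and subtracting the resulting estimates for $w_1$ and $w_2$ produces the target inequality.

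The hard part will be verifying that the limit measure $\mu_{x_0}$ is genuinely holonomic. Testing the adjoint equation against a smooth $\phi \in C^1(\T^n)$ yields a defect $\phi(x_0) - \int \phi\, dx$ that a priori does not vanish, and a naive limit would therefore violate the defining property of $\cF$. I expect to resolve this by decomposing $\mu^\epsilon$ into a holonomic part and a remainder concentrated along approximate characteristics emanating from $x_0$, and showing that the remainder contributes vanishingly in the limit as a consequence of the Bernstein-type estimate above. Assumption (H2) plays an essential role precisely here, providing the integrability needed to carry out the decomposition and to ensure that no mass escapes to infinity in the velocity variable.
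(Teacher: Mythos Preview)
Your overall strategy---construct a Mather measure $\mu_{x_0}$ via an adjoint equation and deduce the representation inequality---is the right one, and it is precisely what the paper does. However, the specific regularization you chose has a genuine obstruction that you correctly identified but did not resolve.

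With the \emph{stationary} adjoint
\[
-\operatorname{div}\bigl(D_pH(x,Du^\epsilon)\,\sigma^\epsilon\bigr)-\epsilon\Delta\sigma^\epsilon=\delta_{x_0}-1,
\]
testing against $\phi\in C^2(\T^n)$ and letting $\epsilon\to 0$ gives
\[
\iint_{\T^n\times\R^n} v\cdot D\phi(x)\,d\mu_{x_0}(x,v)=\phi(x_0)-\int_{\T^n}\phi(x)\,dx,
\]
which is nonzero for generic $\phi$. The limit measure is therefore \emph{not} holonomic, and hence cannot be a Mather measure. Your proposed ``decomposition into a holonomic part and a remainder along characteristics'' is not substantiated: the entire mass of $\sigma^\epsilon$ is generated by the source $\delta_{x_0}-1$, there is no small parameter multiplying that source, and the Bernstein estimate $\epsilon\int|D^2u^\epsilon|^2\sigma^\epsilon\le C$ says nothing about this defect. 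There is no mechanism by which the right-hand side $\phi(x_0)-\int\phi$ becomes small.

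The paper fixes exactly this point by introducing an artificial \emph{time} variable. One solves, for $i=1,2$,
\[
\epsilon(u_i^\epsilon)_t+H(x,Du_i^\epsilon)=\epsilon^4\Delta u_i^\epsilon \quad\text{in }\T^n\times(0,1),\qquad u_i^\epsilon(\cdot,0)=w_i,
\]
and the backward adjoint with terminal data $\sigma^\epsilon(\cdot,1)=\delta_{x_0}$. The key is the factor $\epsilon$ in front of $\partial_t$: when you test the adjoint against $\phi$ and integrate over $\T^n\times[0,1]$, the boundary terms are $\epsilon\phi(x_0)$ and $\epsilon\int\phi\,\sigma^\epsilon(x,0)\,dx$, both $O(\epsilon)$, so the limit measure \emph{is} holonomic. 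This is the missing idea.

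A secondary benefit of the paper's scheme is that it regularizes $w_1$ and $w_2$ simultaneously: since $u_1^\epsilon,u_2^\epsilon$ are classical solutions of the same equation, convexity of $H$ directly yields
\[
\epsilon(u_1^\epsilon-u_2^\epsilon)_t+D_pH(x,Du_2^\epsilon)\cdot D(u_1^\epsilon-u_2^\epsilon)\le\epsilon^4\Delta(u_1^\epsilon-u_2^\epsilon),
\]
and pairing with $\sigma^\epsilon$ gives the representation inequality in one line. Your mollification-plus-Fenchel route would only yield a one-sided bound $w_i^\delta(x_0)-\int w_i^\delta\le\iint L\,d\mu^\epsilon+\omega(\delta)$ for each $i$ separately; subtracting two such upper bounds does not produce the desired inequality for $w_1-w_2$.
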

Let $\cM$ be the projected Mather set on $\T^n$, that is,
\[
\cM = \overline{\bigcup_{\mu \in \widetilde \cM} \text{supp} \left(\text{proj}_{\T^n} \mu\right)}.
\]
Theorem \ref{thm:uniqueness} gives the following straightforward result.
\begin{cor}\label{cor:uniqueness}
Assume {\rm (H1)--(H2)}.
Let $w_1, w_2$ be two  viscosity solutions of ergodic problem {\rm (E)}.
Assume further that $w_1 = w_2$ on $\cM$.
Then $w_1 = w_2$ in $\T^n$.
\end{cor}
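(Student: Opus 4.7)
The plan is to reduce Corollary \ref{cor:uniqueness} directly to Theorem \ref{thm:uniqueness} by showing that the pointwise hypothesis $w_1 = w_2$ on $\cM$ upgrades to equality of the integrals $\iint w_i\, d\mu$ for every Mather measure $\mu$, and then applying the theorem twice.

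The key step is a short measure-theoretic observation. For any $\mu \in \widetilde \cM$, the pushforward $\text{proj}_{\T^n}\mu$ is a Radon probability measure on $\T^n$ whose support, by the very definition of $\cM$, is contained in $\cM$. Since $w_1, w_2 \in C(\T^n)$ coincide on $\cM$, they agree on the support of $\text{proj}_{\T^n}\mu$, hence $\mu$-almost everywhere on $\T^n \times \R^n$. Therefore
\[
\iint_{\T^n \times \R^n} w_1(x)\, d\mu(x,v) = \iint_{\T^n \times \R^n} w_2(x)\, d\mu(x,v) \qquad \text{for every } \mu \in \widetilde \cM,
\]
so the inequality hypothesis of Theorem \ref{thm:uniqueness} is satisfied with equality, and in particular in both directions.

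Applying Theorem \ref{thm:uniqueness} to the ordered pair $(w_1, w_2)$ yields $w_1 \leq w_2$ on $\T^n$, while applying it to $(w_2, w_1)$ gives the reverse inequality; combining the two produces $w_1 = w_2$ throughout $\T^n$. The only mildly subtle point is that $\cM$ is defined as a closure, but this causes no difficulty because the continuity of $w_1 - w_2$ propagates equality from $\bigcup_{\mu \in \widetilde \cM} \text{supp}(\text{proj}_{\T^n}\mu)$ to its closure. All the real mathematical content of the corollary is already carried by Theorem \ref{thm:uniqueness}, so there is no substantial obstacle in the reduction itself.
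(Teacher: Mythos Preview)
Your proposal is correct and follows exactly the route the paper intends: the paper gives no explicit proof, saying only that Theorem \ref{thm:uniqueness} ``gives the following straightforward result,'' and you have written out precisely that straightforward deduction. The observation that $\text{supp}(\text{proj}_{\T^n}\mu) \subset \cM$ for each $\mu \in \widetilde \cM$, together with the double application of Theorem \ref{thm:uniqueness}, is the natural and expected argument.
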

Corollary \ref{cor:uniqueness} was derived in \cite{FaB, FS} much earlier.
We provide a simple proof for Theorem \ref{thm:uniqueness} 
in Section \ref{sec:first}, which is a new application of the nonlinear adjoint method introduced in \cite{Ev1} (see also \cite{T1}). 
A generalization of Theorem \ref{thm:uniqueness} to the second order (degenerate viscous) setting,  Theorem \ref{thm:uniqueness2}, is given in Section \ref{sec:degenerate}.
It is worth mentioning that the result of Theorem \ref{thm:uniqueness2} is new in the literature.

\subsection{Application}
We provide here an application in large time behavior. 
In this context, we need to strengthen the convexity of $H$ in (H1).
\begin{itemize}
\item[(H1')] There exists $\gam>0$ such that
\[
D^2_{pp}H(x,p) \geq \gam I_n \quad \text{ for all } (x,p) \in \T^n \times \R^n.
\]
\end{itemize}
Here, $I_n$ is the identity matrix of size $n$. 

Under assumptions {\rm(H1')}, {\rm (H2)} and that the ergodic constant $c=0$, 
for given $u_0 \in \Lip(\T^n)$, the viscosity solution $u \in C(\T^n \times [0,\infty))$ of 
the Cauchy problem
\[
{\rm (C)} \qquad 
\begin{cases}
u_t + H(x,Du)=0 \quad &\text{ in } \T^n \times (0,\infty),\\
u(x,0)= u_0(x) \quad &\text{ on } \T^n.
\end{cases}
\]
has the following large time behavior 
\begin{equation}\label{thm:large-time}
\lim_{t \to \infty} \|u(\cdot,t) - v\|_{L^\infty(\T^n)} =0, 
\end{equation}
where $v \in \Lip(\T^n)$ is a viscosity solution of {\rm (E)}. 
This result was first proved in \cite{F1}. 
Notice that there are various different ways to prove it (see \cite{BS, CGMT, LMT} and the references therein).
We say that $v$ is the \textit{asymptotic profile} of $u$, and denote it by $u^\infty$, 
or $u^\infty[u_0]$ to display the clear dependence on the initial data $u_0$.

We now give a representation formula for $u^\infty[u_0]$.
\begin{thm}[Asymptotic profiles] \label{thm:profile}
Assume that {\rm(H1')} and {\rm (H2)} hold, and the ergodic constant $c=0$.
For given $u_0 \in \Lip(\T^n)$, let $u^\infty[u_0]$  be the corresponding asymptotic profile.
Then, we have 
\begin{itemize}
\item[(i)] $u^\infty[u_0](y) = u_0^-(y)$ for all $y \in \cM$, 
\item[(ii)] $u^\infty[u_0](x) = \min \left\{ d(x,y) + u_0^-(y)\,:\, y \in \cM \right\}$ for all $x\in \T^n$.
\end{itemize}
Here, 
\begin{align*}
&u_0^-(x) 
= \sup \left\{ v(x)\,:\, v \leq u_0 \ \text{on} \ \T^n, \ \text{and $v$ is a subsolution to {\rm(E)}} \right\}, \\
&
d(x,y) = \sup\left\{v(x)-v(y)\,:\, v \text{ is a subsolution to {\rm(E)}}\right\}.
\end{align*}
\end{thm}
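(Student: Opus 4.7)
The plan is to establish part (i) first and then deduce part (ii) from it using Corollary \ref{cor:uniqueness}.

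For part (i), the lower bound $u_0^- \leq u^\infty[u_0]$ on $\T^n$ is immediate: $u_0^-$, viewed as a time-independent function, is a viscosity subsolution of (C) with $u_0^- \leq u_0$ at $t=0$, so the comparison principle gives $u_0^- \leq u(\cdot,t)$ for all $t \geq 0$, and sending $t \to \infty$ yields the inequality. The reverse inequality $u^\infty[u_0] \leq u_0^-$ on $\cM$ is the crux. I would combine the Lax--Oleinik representation of $u$ with the flow-invariance of Mather measures. Fix $y \in \cM$ and a Mather measure $\mu$ whose projection $\pi := \mathrm{proj}_{\T^n}\mu$ contains $y$ in its support. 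For $\mu$-a.e.\ $(x_0,v_0)$, the projected Euler--Lagrange orbit $\gamma(s)$ with $\gamma(0)=x_0$ stays in $\cM \subset \cA$, where any viscosity subsolution of (E) is calibrated; in particular
\[
u_0^-(x_0) - u_0^-(\gamma(-t)) = \int_{-t}^0 L(\gamma,\dot\gamma)\,ds, \qquad t>0.
\]
Combining with the Lax--Oleinik upper bound $u(x_0,t) \leq u_0(\gamma(-t)) + \int_{-t}^0 L\,ds$, integrating against $\mu$, exploiting flow-invariance of $\mu$ to cancel the action integral, and passing to the limit $t \to \infty$, one concludes $\int (u^\infty[u_0] - u_0^-)\,d\pi = 0$. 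Together with the pointwise bound $u^\infty[u_0] \geq u_0^-$ and continuity of both sides, this forces $u^\infty[u_0] = u_0^-$ on $\mathrm{supp}(\pi)$, and then on all of $\cM$ after taking the union over Mather measures.

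For part (ii), define $v(x) := \min_{y \in \cM}\{d(x,y) + u_0^-(y)\}$; the minimum is attained by compactness of $\cM$ and continuity of $d$ and $u_0^-$. Since $u_0^-$ is itself a subsolution of (E), the values $\{u_0^-(y)\}_{y\in\cM}$ are $d$-compatible in the sense $u_0^-(y) - u_0^-(z) \leq d(y,z)$ for all $y,z \in \cM$ (immediate from the definition of $d$). Under this compatibility, standard Ma\~n\'e-potential theory implies that $v$ is the maximal viscosity subsolution of (E) with $v \leq u_0^-$ on $\cM$, and $v(y) = u_0^-(y)$ for every $y \in \cM$. Since $\cM \subset \cA$, each $d(\cdot,y)$ with $y \in \cM$ is in fact a viscosity solution of (E) on the entire torus, so $v$, as the minimum of an equi-continuous, compactly indexed family of viscosity solutions, is also a supersolution. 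Hence $v$ solves (E), and by part (i) it agrees with $u^\infty[u_0]$ on $\cM$. Corollary \ref{cor:uniqueness} then forces $v = u^\infty[u_0]$ everywhere on $\T^n$, which is exactly (ii).

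The principal difficulty lies in the hard direction of (i). The Lax--Oleinik bound together with flow-invariance of $\mu$ only directly produces the weaker inequality $\int u^\infty[u_0]\,d\pi \leq \int u_0\,d\pi$, which exceeds the required $\int u_0^-\,d\pi$ by $\int(u_0 - u_0^-)\,d\pi$. Closing this gap requires using both the calibration identity for $u_0^-$ and the analogous calibration of $u^\infty[u_0]$ on the Aubry set (which together force $u^\infty[u_0] - u_0^-$ to be a first integral of the Euler--Lagrange flow on each Mather orbit), combined with Poincar\'e recurrence, or, alternatively, adapting the nonlinear adjoint method from the proof of Theorem \ref{thm:uniqueness} to the time-dependent Cauchy problem. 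Either route requires nontrivial weak-KAM machinery, and this is where the main technical work lies.
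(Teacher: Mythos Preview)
Your plan for (ii) is sound and matches the paper's. The gap, which you partly identify yourself, is in the hard direction of (i). Note first that your second paragraph overclaims: the combination of the Lax--Oleinik bound, calibration of $u_0^-$ along the Mather orbit $\gamma$, and flow-invariance of $\mu$ does \emph{not} yield $\int (u^\infty - u_0^-)\,d\pi = 0$; after the cancellations one gets only $\int u^\infty\,d\pi \le \int u_0\,d\pi$, exactly as you concede in the last paragraph. Since $u_0 - u_0^-$ can be strictly positive on $\cM$, this is genuinely too weak, and neither of your fixes closes it. That $u^\infty - u_0^-$ is a first integral on Mather orbits says nothing about its value. Poincar\'e recurrence, applied pointwise to $u(y,t) - u_0^-(y) \le (u_0 - u_0^-)(\gamma(-t))$ along a recurrent sequence $\gamma(-t_k) \to y$, at best yields $u^\infty(y) \le u_0(y)$ on $\cM$; combined with $u^\infty(y) - u^\infty(z) \le d(y,z)$ this gives only $u^\infty(y) \le \min_{z \in \cM}\{u_0(z) + d(y,z)\}$, which in general exceeds $u_0^-(y) = \min_{z \in \T^n}\{u_0(z) + d(y,z)\}$. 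Running the adjoint method on (C) reproduces the same weak integral bound (this is essentially the computation in Lemma~\ref{lem:sub}, valid for arbitrary data).

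The paper avoids the difficulty by not comparing $u^\infty$ to $u_0^-$ directly. For each fixed $z \in \T^n$ it introduces the auxiliary \emph{subsolution} $w_0^z(x) := u_0(z) + d(x,z)$. Because $w_0^z$ is a subsolution, Lemma~\ref{lem:sub} applies and the solution $w$ of (C) with data $w_0^z$ satisfies $w(y,t) = u_0(z) + d(y,z)$ for every $y \in \cM$ and every $t>0$. An optimal curve for $w(y,t)$, concatenated with a near-optimal curve from its endpoint to $z$, produces a competitor (terminating at $z$) in the Lax--Oleinik formula for $u(y, t+t_\ep)$ with the \emph{original} data $u_0$, giving $u(y, t+t_\ep) \le u_0(z) + d(y,z) + \ep$. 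Sending $t \to \infty$, then $\ep \to 0$, then taking the infimum over $z$ yields $u^\infty(y) \le u_0^-(y)$. The point is that for each fixed $z$ the auxiliary data $w_0^z$ \emph{is} a subsolution, so the inequality of Lemma~\ref{lem:sub} is tight on $\cM$; working directly with $u_0$, as you do, it is not.
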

Theorem \ref{thm:profile} was first proved in \cite[Theorem 3.1]{DS}, and our purpose is to give a different proof in Section \ref{sec:app}, 
which seems to be simpler.


\section{Uniqueness set of the ergodic problem} \label{sec:first}
We present in this section the proof of Theorem \ref{thm:uniqueness}.

\begin{proof}[Proof of Theorem {\rm\ref{thm:uniqueness}}]
We use ideas introduced in \cite{CGMT}.

For each $i=1,2$ and each $\ep>0$,
let $u_i^\ep$ be the viscosity solution to the Cauchy problem
\begin{equation} \label{C-ep}
\begin{cases}
\ep(u^{\ep}_i)_t + H(x,Du^{\ep}_i) = \ep^4\Del u^{\ep}_i \qquad &\text { in } \T^n \times (0,1),\\
u^{\ep}_i(x,0) = w_i(x) \qquad &\text{ on } \T^n.
\end{cases}
\end{equation}
Without the viscosity term, \eqref{C-ep} becomes
\begin{equation} \label{C-0}
\begin{cases}
\ep(u_i)_t + H(x,Du_i) = 0 \qquad &\text { in } \T^n \times (0,1),\\
u_i(x,0) = w_i(x) \qquad &\text{ on } \T^n.
\end{cases}
\end{equation}
It is clear that the unique viscosity solution to \eqref{C-0} is $u_i(x,t) = w_i(x)$ for all $(x,t) \in \T^n \times [0,1)$
because of the fact that $w_i$ is a viscosity solution to (E).
Thanks to (H2), by a standard argument, 
there exists $C>0$ independent of $\ep$ such that
\begin{equation}\label{grad-bound0}
\|Du^\ep_i\|_{\Li(\T^n \times (0,1))} \leq C
\end{equation}
and
\begin{equation} \label{eqn-error0}
\|u_i^{\ep}- w_i\|_{\Li(\T^n\times(0,1))}\le C\ep.
\end{equation}
See \cite[Propositions 4.15 and 5.5]{LMT} for the proofs of similar versions of \eqref{grad-bound0} and \eqref{eqn-error0} for instance. 
Our plan is to use $u_1^\ep, u_2^\ep$ to deduce the conclusion as $\ep \to 0$.

For any $x_0 \in \T^n$, let $\sig^\ep$ be the solution to 
\begin{equation*}
\begin{cases}
-\ep \sig^\ep_t -\text{div}(D_p H(x,Du^\ep_2) \sig^\ep) = \ep^4\Del \sig^{\ep} \qquad &\text { in } \T^n \times (0,1),\\
\sig^\ep(x,1)= \del_{x_0} \qquad &\text{ on } \T^n.
\end{cases}
\end{equation*}
Here $\del_{x_0}$ is the Dirac delta mass at $x_0$.

By convexity of $H$ in (H1), we have
\[
\ep(u_1^\ep - u_2^\ep)_t + D_pH(x,Du_2^\ep)\cdot D(u_1^\ep-u_2^\ep) \leq \ep^4\Del (u_1^\ep-u_2^\ep).
\]
Multiply this by $\sig^\ep$, integrate on $\T^n$, and note that 
\begin{align*}
&
\int_{\T^n} 
\left(-D_pH(x,Du_2^\ep)\cdot D(u_1^\ep-u_2^\ep)+\ep^4\Del (u_1^\ep-u_2^\ep)\right)\sig^\ep\,dx\\
=&\, 
\int_{\T^n} 
\left(\text{div}(D_p H(x,Du^\ep_2) \sig^\ep)+\ep^4\Del \sig^{\ep} \right)(u_1^\ep-u_2^\ep)\,dx
=-\int_{\T^n} 
\ep\sig_{t}^{\ep}(u_1^\ep-u_2^\ep)\,dx. 
\end{align*}
Thus, 
\[
\frac{d}{dt} \int_{\T^n} (u_1^\ep - u_2^\ep) \sig^\ep \,dx \leq 0,
\]
which yields
\begin{equation}\label{u-ineq0}
(u_1^\ep - u_2^\ep)(x_0,1) \leq \int_0^1 \int_{\T^n} (u_1^\ep - u_2^\ep) \sig^\ep \,dxdt.
\end{equation}

In light of the Riesz theorem, there exists $\nu^{\ep}\in\cP(\T^n\times\R^n)$ such that 
\begin{equation}\label{def-mu0}
\iint_{\T^n\times\R^n}\varphi(x,p)\,d\nu^{\ep}(x,p)
=
\int_{0}^{1}\int_{\T^n}\varphi(x,Du_2^\ep)\sig^{\ep}\,dxdt\quad
\text{for all} \ \varphi\in C_{c}(\T^n\times\R^n). 
\end{equation}
Then,  \eqref{u-ineq0} becomes
\begin{equation}\label{ineq-1-0}
(u_1^\ep - u_2^\ep)(x_0,1) \leq \iint_{\T^n\times\R^n} (u_1^\ep - u_2^\ep) \,d\nu^{\ep}(x,p). 
\end{equation}

Thanks to \eqref{grad-bound0}, we have that $\text{supp}(\nu^\ep) \subset \T^n \times \ol{B}(0,C)$.
There exists $\{\ep_{j}\}\to 0$ such that $\nu^{\ep_j}\rightharpoonup \nu\in\cP(\T^n\times\R^n)$ 
as $j\to\infty$ weakly in the sense of measures. 
We set $\mu \in \cP(\T^n \times \R^n)$ be such that
\begin{equation}\label{def-mu-nu-0}
\iint_{\T^n \times \R^n} \varphi(x,p)\, d\nu(x,p) = \iint_{\T^n \times \R^n} \varphi(x,D_v L(x,v))\,d\mu(x,v).
\end{equation}
We provide a proof that $\mu$ is a Mather measure in Lemma \ref{lem:mu} below for completeness 
(see also \cite[Proposition 2.3]{MT6}, \cite[Proposition 6.11]{LMT}).

Sending $j \to \infty$ in \eqref{ineq-1-0} and using \eqref{eqn-error0} to yield
\[
w_1(x_0) -w_2(x_0) \leq    \iint_{\T^n\times\R^n} (w_1  - w_2)\,d\mu(x,v) \leq 0.
\qedhere
\]
\end{proof}

\begin{lem}\label{lem:mu}
For each $\ep>0$, let $\nu^\ep$ be the measure defined in \eqref{def-mu0}.
Assume that there exists a sequence $\{\ep_j\} \to 0$ such that
$\nu^{\ep_j} \rightharpoonup \nu \in \cP(\T^n \times \R^n)$ as $j \to \infty$ weakly in the sense of measures.
Let $\mu$ be a measure defined through $\nu$ by \eqref{def-mu-nu-0}.
Then $\mu$ is a Mather measure.
\end{lem}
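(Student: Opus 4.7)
The plan is to verify that $\mu$ satisfies the two defining properties of a Mather measure: that $\mu \in \cF$ (the holonomic condition), and that $\iint L\,d\mu = 0$, which in view of \eqref{min-0} identifies $\mu$ as a minimizer of \eqref{M-min}. Both properties will be obtained by testing the adjoint equation for $\sig^\ep$ against suitable functions and passing $\ep_j \to 0$, converting $\nu^{\ep_j}$-integrals into $\nu$-integrals and then into $\mu$-integrals via \eqref{def-mu-nu-0}. A preliminary and useful observation is that $\sig^\ep(\cdot,t)$ is a probability measure on $\T^n$ for every $t$: integrating the adjoint equation over $\T^n$ shows that $\int_{\T^n}\sig^\ep\,dx$ is constant in $t$ and thus equal to $1$, while positivity follows from the terminal datum $\del_{x_0}$ by the maximum principle.

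For holonomicity, fix $\phi \in C^1(\T^n)$ and multiply the adjoint equation by $\phi$. Integrating by parts in $x$ over $\T^n \times (0,1)$ yields
\begin{equation*}
\int_0^1\!\!\int_{\T^n} D_pH(x,Du_2^\ep) \cdot D\phi(x)\,\sig^\ep\,dxdt = \ep\phi(x_0) - \ep\!\int_{\T^n}\!\phi\,\sig^\ep(\cdot,0)\,dx + \ep^4\!\int_0^1\!\!\int_{\T^n}\!\Del\phi\,\sig^\ep\,dxdt,
\end{equation*}
whose right-hand side is $O(\ep)$ because $\sig^\ep(\cdot,t)$ has unit mass and $\phi$ is smooth. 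By \eqref{def-mu0}, the left-hand side equals $\iint_{\T^n\times\R^n} D_pH(x,p)\cdot D\phi(x)\,d\nu^\ep(x,p)$, so passing $\ep_j\to 0$ gives $\iint D_pH(x,p)\cdot D\phi(x)\,d\nu(x,p) = 0$. Invoking the Legendre duality $D_pH(x,D_vL(x,v))=v$ in \eqref{def-mu-nu-0} rewrites this as $\iint v\cdot D\phi(x)\,d\mu(x,v) = 0$, so $\mu \in \cF$.

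For the minimization value, test the adjoint equation against $u_2^\ep$ itself. Integrating by parts in $t$ produces the boundary terms $-\ep u_2^\ep(x_0,1) + \ep\int w_2\sig^\ep(\cdot,0)\,dx$ and the bulk term $\ep\int_0^1\!\int (u_2^\ep)_t\sig^\ep\,dxdt$, in which one substitutes $\ep(u_2^\ep)_t = \ep^4\Del u_2^\ep - H(x,Du_2^\ep)$ from \eqref{C-ep}; integration by parts in $x$ converts the divergence term into $\int_0^1\!\int Du_2^\ep\cdot D_pH(x,Du_2^\ep)\sig^\ep\,dxdt$ and the viscous term into $-\ep^4\int_0^1\!\int \Del u_2^\ep\,\sig^\ep\,dxdt$. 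A pleasant algebraic cancellation of the two $\ep^4$-contributions leaves
\begin{equation*}
\int_0^1\!\!\int_{\T^n}\bigl(Du_2^\ep\cdot D_pH(x,Du_2^\ep)-H(x,Du_2^\ep)\bigr)\sig^\ep\,dxdt = \ep u_2^\ep(x_0,1) - \ep\!\int_{\T^n}\!w_2\,\sig^\ep(\cdot,0)\,dx,
\end{equation*}
which is $O(\ep)$ by \eqref{eqn-error0} and the unit mass of $\sig^\ep(\cdot,0)$. Rewriting the left-hand side via \eqref{def-mu0} and sending $\ep_j\to 0$ gives $\iint(p\cdot D_pH(x,p)-H(x,p))\,d\nu = 0$. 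Using the Legendre identity $L(x,D_pH(x,p)) = p\cdot D_pH(x,p) - H(x,p)$ together with \eqref{def-mu-nu-0} converts this into $\iint L\,d\mu = 0$, completing the identification of $\mu$ as a Mather measure.

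The main obstacle one would normally expect in a nonlinear adjoint argument of this type is the control of the viscous $\ep^4$-error terms, which in principle demand $L^\infty$ or $L^1$ bounds on $\Del u_2^\ep$ with favorable scaling in $\ep$. The attractive feature here is that the two $\ep^4$-contributions in the second step cancel by elementary algebra; combined with the uniform estimates \eqref{grad-bound0} and \eqref{eqn-error0} and the trivial mass-conservation property for $\sig^\ep$, this removes the need for any higher-order a priori estimates and makes the argument essentially self-contained.
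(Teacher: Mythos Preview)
Your proof is correct and follows essentially the same approach as the paper's: test the adjoint equation against $\phi$ for the holonomic condition and (equivalently) against $u_2^\ep$ for the minimizing value, with the same cancellation of the two $\ep^4$-contributions. The only minor point is that for $\phi \in C^1(\T^n)$ the term $\Del\phi$ need not exist; the paper handles this by approximating $\phi$ by $\phi^m \in C^\infty(\T^n)$ and letting $m\to\infty$ after $\ep_j\to 0$.
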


\begin{proof}
Fix any $\phi \in C^1(\T^n)$, 
and consider a family  $\left\{\phi^m \right\} \subset C^\infty(\T^n)$ such that $\phi^m\to\phi$ in $C^1(\T^n)$ as $m \to \infty$.
 
Multiply the adjoint equation with $\phi^m$ and integrate on $\T^n \times [0,1]$ to imply
\begin{multline*}
\ep \int_{\T^n} \phi^m(x) \sig^\ep(x,0)\,dx - \ep \phi^m(x_0) +\int_0^1 \int_{\T^n} D_p H(x,Du_2^\ep) \cdot D\phi^m(x) \sig^\ep(x,t)\,dxdt \\
= \ep^4 \int_0^1 \int_{\T^n} \Del \phi^m(x) \sig^\ep(x,t)\,dxdt.
\end{multline*}
Let $\ep =\ep_j \to 0$ and $m\to\infty$ in this order to get
\[
\iint_{\T^n \times \R^n} D_p H(x,p)\cdot D\phi(x) \, d\nu(x,p) = 
\iint_{\T^n \times \R^n} v \cdot D\phi(x) \, d\mu(x,v) = 0. 
\]
Thus, $\mu \in \cF$.

We rewrite \eqref{C-ep} as
\[
\ep (u_2^\ep)_t + D_p H(x,Du_2^\ep)\cdot Du_2^\ep - \ep^4 \Del u_2^\ep = D_p H(x,Du_2^\ep)\cdot Du_2^\ep - H(x,Du_2^\ep).
\]
Multiply this by $\sig^\ep$ and integrate on $\T^n \times [0,1]$ to yield
\[
\ep u_2^\ep(x_0,1) - \ep \int_{\T^n} u_2^\ep(x,0) \sig^\ep(x,0)\,dx
=\int_0^1 \int_{\T^n}  (D_p H(x,Du_2^\ep)\cdot Du_2^\ep - H(x,Du_2^\ep))\sig^\ep\,dxdt.
\]
We again let $\ep=\ep_j \to 0$ to achieve that
\[
0=\iint_{\T^n \times \R^n} (D_p H(x,p)\cdot p - H(x,p))\, d\nu(x,p)
=\iint_{\T^n\times \R^n} L(x,v)\, d\mu(x,v).
\]
Also, note that we have 
\begin{equation}\label{ge-0}
\iint_{\T^n\times\R^n}L(x,v)\,d\mu\ge0
\qquad\text{for all} \ \mu\in\cF, 
\end{equation}
which, together with \eqref{min-0}, completes the proof. 
See \cite[Lemma 6.12]{LMT} for a proof of \eqref{ge-0}.
\end{proof}


\section{Application}\label{sec:app}
In this section, we always assume that (H1')--(H2) hold and that the ergodic constant $c=0$.
\begin{lem} \label{lem:sub}
Assume that $u_0$ is a viscosity subsolution of {\rm (E)}.
Then,  
\[
u^\infty[u_0] = u_0  \quad \text{ on } \cM.
\]
\end{lem}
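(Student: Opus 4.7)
The plan is to establish the two inequalities $u^\infty[u_0] \geq u_0$ on $\T^n$ and $u^\infty[u_0] \leq u_0$ on $\cM$, and then combine them.

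For the lower bound, since $u_0$ is a subsolution of (E), the constant-in-time extension $(x,t) \mapsto u_0(x)$ is a subsolution of (C): its time derivative vanishes and $H(x,Du_0) \leq 0$ in the viscosity sense. The comparison principle for (C) then gives $u(x,t) \geq u_0(x)$ for all $(x,t) \in \cQ$, and letting $t \to \infty$ via \eqref{thm:large-time} yields $u^\infty[u_0] \geq u_0$ on $\T^n$.

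For the upper bound on $\cM$, I would use the Lax--Oleinik representation
\[
u(x,t) = \inf_{\gamma(t)=x} \left\{u_0(\gamma(0)) + \int_0^t L(\gamma(s),\dot\gamma(s))\,ds\right\},
\]
where the infimum runs over absolutely continuous curves $\gamma:[0,t] \to \T^n$. Fix any Mather measure $\mu$, let $\bar\mu = (\text{proj}_{\T^n})_\ast \mu$, and let $\Phi^s$ denote the Euler--Lagrange flow on $\T^n \times \R^n$. For each $(x,v) \in \supp \mu$, plugging the Euler--Lagrange orbit $\tau \mapsto \text{proj}_{\T^n}\Phi^{\tau-t}(x,v)$ (which lands at $x$ at time $\tau = t$) into the infimum gives
\[
u(x,t) \leq u_0\bigl(\text{proj}_{\T^n}\Phi^{-t}(x,v)\bigr) + \int_{-t}^0 L\bigl(\Phi^s(x,v)\bigr)\,ds.
\]
Integrating against $d\mu(x,v)$ and using the invariance $(\Phi^s)_\ast \mu = \mu$ together with $\iint L\,d\mu = 0$ (which follows from \eqref{min-0}), the two right-hand terms collapse to $\iint u_0\,d\bar\mu$ and $0$, respectively, so
\[
\int_{\T^n} u(x,t)\,d\bar\mu(x) \leq \int_{\T^n} u_0(x)\,d\bar\mu(x).
\]

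Combining with the first step, the non-negative function $u(\cdot,t) - u_0$ integrates non-positively against $\bar\mu$, hence vanishes $\bar\mu$-a.e., and then on $\supp \bar\mu$ by continuity of both functions. Sending $t \to \infty$ via \eqref{thm:large-time} gives $u^\infty[u_0] = u_0$ on $\supp \bar\mu$, and taking the closure of the union over all Mather measures extends the equality to $\cM$. The main delicate point I expect is the rigorous justification of the integration against $\mu$ --- specifically, that the Euler--Lagrange flow $\Phi^s$ is well-defined and measurable on $\supp \mu$ and that $\mu$ is $\Phi^s$-invariant --- which is classical under the Tonelli-type assumptions {\rm (H1')--(H2)}.
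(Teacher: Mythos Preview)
Your proof is correct, but it takes a genuinely different route from the paper. Both arguments share the lower bound $u^\infty \geq u_0$ via comparison, and both aim to show the reverse inequality in the $\bar\mu$-average for every Mather measure $\mu$; the difference lies in how that averaged upper bound is obtained.

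The paper stays entirely on the PDE side: it mollifies $u$ to get a smooth $u^\del$, uses convexity of $H$ to see that $u^\del$ is an approximate classical subsolution of (C), and then differentiates $t\mapsto \iint u^\del(x,t)\,d\mu$ directly, using only the holonomic constraint $\iint v\cdot D\phi\,d\mu=0$ and the minimizing property $\iint L\,d\mu=0$. No dynamics beyond the definition of $\widetilde\cM$ enters. Your argument instead invokes the Lax--Oleinik formula and the invariance of $\mu$ under the Euler--Lagrange flow $\Phi^s$. That invariance is indeed classical under (H1')--(H2) (Ma\~n\'e, Fathi), but it is a substantially deeper fact than anything the paper's proof uses, and it is \emph{not} part of the definition of Mather measure adopted here (minimizers among holonomic measures). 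So you are importing a nontrivial piece of weak KAM theory that the paper is deliberately trying to bypass.

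What each approach buys: the paper's mollification argument is more self-contained relative to the definitions in play, and---more importantly for this paper's program---it is robust enough to survive in the degenerate viscous setting of Section~\ref{sec:degenerate}, where there is no Euler--Lagrange flow to speak of. Your approach is perhaps more transparent to a reader already fluent in Aubry--Mather theory, and it yields the pointwise statement $u(x,t)=u_0(x)$ on $\supp\bar\mu$ for every $t$ (the content of the Remark following the lemma) without a separate limit step. Both are valid; the paper's is closer to its stated goal of giving a purely PDE-based treatment.
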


\begin{proof}
We write $u^\infty$ for $u^\infty[u_0]$ in the proof for simplicity.

By the usual comparison principle, we have $u(x,t) \geq u_0(x)$ for all $(x,t) \in \T^n \times [0,\infty)$.
Hence, $u^\infty \geq u_0$ on $\T^n$.

Next, let $\rho$ be a standard mollifier in $\R^n$. For each $\del>0$, let $\rho^\del(x) = \del^{-n} \rho(\del^{-1}x)$ for all $x\in \R^n$.
Let $u^\del = \rho^\del *u$. Then due to the convexity of $H$ in $p$, 
$u^\del$ is a subsolution to
\[
u^\del_t + H(x,Du^\del) \leq C \del \quad \text{ in }\T^n \times (0,\infty).
\]
For any Mather measure $\mu\in\widetilde{\cM}$, by the holonomic and minimizing properties, we have
\begin{align*}
\frac{d}{dt} \iint_{\T^n \times \R^n} u^\del(x,t) \, d\mu 
&= \iint_{\T^n\times \R^n} (u^\del_t + v \cdot Du^\del  - L(x,v))\, d\mu\\
& \le \iint_{\T^n\times \R^n} u^\del_t + H(x,Du^\del)\, d\mu
\leq C\del.
\end{align*}
Therefore, for any $T>0$,
\[
\iint_{\T^n\times \R^n} u^\del(x,T)\,d\mu 
\leq \iint_{\T^n\times \R^n} (u_0)^\del(x)\,d\mu + C\del T.
\]
Let $\del \to 0$ and $T \to \infty$ in this order to yield
\[
\iint_{\T^n\times \R^n} u^\infty \,d\mu \leq \iint_{\T^n\times \R^n} u_0\,d\mu. 
\]
Combined with $u^\infty \geq u_0$ on $\T^n$, we obtain 
$u^\infty= u_0$ on $\cM$,  
which completes the proof.
\end{proof}

\begin{rem}
Notice that we get 
\[
u(x,t) = u_0(x) \quad \text{ for all } x\in \cM,\ t \in [0,\infty),
\]
in the above proof.
\end{rem}

We present next the proof of Theorem \ref{thm:profile}.
Before proceeding to the proof, it is important noticing that $d$ has the following representation formula
\[
d(x,y) =\inf \left\{ \int_0^t L(\gam(s),-\dot \gam(s))\,ds\,:\, t>0, \gam \in \AC([0,t],\T^n), \gam(0)=x, \gam(t)=y\right\}.
\]

\begin{proof}[Proof of Theorem {\rm\ref{thm:profile}}]
It is enough to give only the proof of (i).
The second claim (ii) follows immediately from Corollary \ref{cor:uniqueness}, claim (i) and the representation formulas of $d$ as well as of solutions to (E).

\smallskip

By the definition of $u_0^-$, we have $u_0^-\le u_0$ on $\T^n$. 
In light of the comparison principle, $u_0^-\le u$ on $\T^n\times[0,\infty)$, which implies 
$u_0^- \leq u^\infty$ on $\T^n$. 

We prove the reverse inequality holds on $\cM$.
Fix $y\in \cM$ and $z\in \T^n$. Set $w_0^z(x) = u_0(z) + d(x,z)$ for $x\in \T^n$. 
Then, note that $w_0^z$ is a viscosity subsolution to (E). 
Let $w$ be the solution to (C) with initial data $w_0^z$.
Thanks to Lemma \ref{lem:sub}, we get 
\begin{equation}\label{eq:w1}
w(y,t) = w_0^z(y) = u_0(z) + d(y,z) \quad \text{ for all } t\in  [0,\infty).
\end{equation}
For a large $t >1$, pick $\gam:[0,t] \to \T^n$ to be an optimal path such that $\gam(0)=y$ and
\[
w(y,t) = w_0^z(\gam(t)) + \int_0^t L(\gam(s),-\dot \gam(s))\,ds= u_0(z) + d(\gam(t),z)  + \int_0^t L(\gam(s),-\dot \gam(s))\,ds.
\]

On the other hand, for any $\ep>0$, there exists $t_\ep>0$ and a path $\gam: [t,t+t_\ep] \to \T^n$ with $\gam(t+t_\ep)=z$ satisfying
\[
d(\gam(t),z) \geq \int_t^{t+t_\ep} L(\gam(s),-\dot \gam(s))\,ds - \ep.
\]
Combine the two relations above to imply
\begin{equation}\label{eq:w2}
w_0^z(y) +\ep \geq u_0(z) +\int_0^{t+t_\ep} L(\gam(s),-\dot \gam(s))\,ds \geq u(y,t+t_\ep).
\end{equation}
By letting $t \to \infty$ in \eqref{eq:w2}, one gets
\[
w_0^z(y) + \ep \geq u^\infty(y).
\]
Next, let $\ep \to 0$ to conclude that $u_0(z) + d(y,z) \geq u^\infty(y)$. Vary $z$ to yield
\[
u^\infty(y) \leq \min_{z\in \T^n} (u_0(z) + d(y,z)).
\]
Notice here that in view of the inf-stability of viscosity subsolutions to 
convex first order Hamilton--Jacobi equations, we have 
$\min_{z\in \T^n} (u_0(z) + d(y,z))=u_0^-(y)$, which finishes the proof. 
\end{proof}


\section{Generalization: degenerate viscous cases} \label{sec:degenerate}

In this section, we present a generalization of Theorem \ref{thm:uniqueness} to the second order (degenerate viscous) setting. In this setting, the ergodic problem is
\[
{\rm (VE)} \qquad 
H(x,Dw) = \tr\left(A(x)D^2 w\right) + c \quad \text{ in } \T^n.
\]
As above, $(w,c) \in C(\T^n) \times \R$ is a pair of unknowns.
Here $A:\T^n \to \mathbb M^{n \times n}_{\text{sym}}$ is the diffusion matrix,
where $ \mathbb M^{n \times n}_{\text{sym}}$ is the set of all $n\times n$ real symmetric matrices.
We need the following assumptions. 
\begin{itemize}
\item[(H2')] There exist $\gam>1$ and $C>0$ such that, for all $(x,p) \in \T^n \times \R^n$,
\[
\begin{cases}
\displaystyle \frac{1}{C}|p|^\gam - C \leq H(x,p) \leq C(|p|^\gam+1),\\
|D_x H(x,p)| \leq C(1+|p|^\gam),\\
|D_p H(x,p)| \leq C(1 + |p|^{\gam-1}).
\end{cases}
\]
\item[(H3)] 
$A(x)=(a_{ij}(x))_{1\leq i,j \leq n} \in \mathbb M^{n \times n}_{\text{sym}}$ with $A\ge0$, and  $a_{ij}\in C^2(\T^n)$ for all $1\leq i,j \leq n$.  
\end{itemize}
By normalization, we always assume that $c=0$ in this section.
In fact, under assumptions (H1), (H2') and (H3), for any $w\in C(\T^n)$ solving (VE), $w\in \Lip(\T^n)$ (see \cite[Theorem 3.1]{AT}).

\begin{defn} \label{def:generalized}
Let $\widetilde \cM_V$ be the set of all minimizers of the minimizing problem
\begin{equation}\label{Mv-min}
\min_{\mu \in \cF} \iint_{\T^n \times \R^n} L(x,v) \,d\mu(x,v),
\end{equation}
where
\[
\cF_V=\left\{ \mu \in \mathcal{P}(\T^n \times \R^n)\,:\, 
\iint_{\T^n \times \R^n} v\cdot D\phi -a_{ij}\phi_{x_ix_j}\,d\mu(x,v) = 0 \ 
\text{ for all } \phi \in C^2(\T^n) \right\}.
\]
Each measure in $\widetilde \cM_V$ is called a generalized Mather measure.
\end{defn}

Because of normalization that $c=0$, as in the first order case, one has that
\begin{equation}\label{min-viscous}
\min_{\mu \in \cF_V} \iint_{\T^n \times \R^n} L(x,v) \,d\mu(x,v)=0.
\end{equation}
The proof of this claim follows  \cite[Lemma 6.12]{LMT}. To be more precise,  \cite[Lemma 6.12]{LMT} deals with the special case $A(x) = a(x) I_n$ where $a\in C^2(\T^n,[0,\infty))$
and $I_n$ is the identity matrix of size $n$. 
For general diffusion matrix $A$ satisfying (H3), we perform first inf-sup convolutions, and then normal convolution of a solution $w$ of (VE).
See also \cite{IMT1} for a form of \eqref{min-viscous} in fully nonlinear, degenerate elliptic PDE settings.

\begin{thm}\label{thm:uniqueness2}
Assume {\rm (H1), (H2'), (H3)}.
Let $w_1, w_2$ be two continuous viscosity solutions of ergodic problem {\rm (E)}.
Assume further that 
\begin{equation*}\label{con-unique}
\iint_{\T^n \times \R^n} w_1(x) \,d\mu(x,v) \leq \iint_{\T^n \times \R^n} w_2(x)\,d\mu(x,v) \quad \text{ for all } \mu\in \widetilde \cM_V.
\end{equation*}
Then $w_1 \leq w_2$ in $\T^n$.
\end{thm}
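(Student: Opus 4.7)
The plan is to adapt the nonlinear adjoint argument of Theorem \ref{thm:uniqueness} to the second-order setting: replace the purely artificial viscosity by $\tr(A D^2 \cdot)$ and add an $\ep^4 \Del$ term to keep the linearized equation uniformly parabolic. Fix $x_0 \in \T^n$. For each $\ep>0$ and $i=1,2$, consider the Cauchy problem
\[
\begin{cases}
\ep (u_i^\ep)_t + H(x, Du_i^\ep) = \tr(A(x) D^2 u_i^\ep) + \ep^4 \Del u_i^\ep & \text{in } \T^n \times (0,1), \\
u_i^\ep(x, 0) = w_i(x) & \text{on } \T^n.
\end{cases}
\]
The first task is to establish the uniform bounds $\|Du_i^\ep\|_{\Li} \leq C$ and $\|u_i^\ep - w_i\|_{\Li} \leq C\ep$. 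The error estimate follows because without the $\ep^4 \Del$ term $u_i \equiv w_i$ is the unique viscosity solution (since $w_i$ solves (VE) with $c=0$), while the Lipschitz bound comes from a Bernstein-type argument of the kind used in \cite{AT}, made possible by the $\ep^4\Del$ regularization.

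Next, introduce the backward adjoint equation
\[
\begin{cases}
-\ep \sig^\ep_t - \Div(D_pH(x, Du_2^\ep) \sig^\ep) = (a_{ij}(x) \sig^\ep)_{x_ix_j} + \ep^4 \Del \sig^\ep & \text{in } \T^n \times (0,1), \\
\sig^\ep(x, 1) = \del_{x_0} & \text{on } \T^n;
\end{cases}
\]
the form $(a_{ij}\sig^\ep)_{x_ix_j}$ is exactly what results from integrating $\tr(A D^2 \cdot)$ twice by parts using the symmetry and $C^2$ regularity of $A$ from (H3). Uniform parabolicity gives $\sig^\ep \geq 0$ smooth for $t<1$ with $\int_{\T^n} \sig^\ep(\cdot,t) \, dx = 1$. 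By (H1),
\[
\ep (u_1^\ep - u_2^\ep)_t + D_pH(x, Du_2^\ep) \cdot D(u_1^\ep - u_2^\ep) \leq \tr(A D^2(u_1^\ep - u_2^\ep)) + \ep^4 \Del(u_1^\ep - u_2^\ep),
\]
and multiplying by $\sig^\ep$ and integrating on $\T^n$, the spatial terms cancel against the adjoint equation (here the symmetric divergence form is essential), so $\frac{d}{dt}\int_{\T^n}(u_1^\ep - u_2^\ep) \sig^\ep \, dx \leq 0$, producing the key inequality
\[
(u_1^\ep - u_2^\ep)(x_0, 1) \leq \int_0^1 \int_{\T^n} (u_1^\ep - u_2^\ep) \sig^\ep \, dx \, dt.
\]

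I would then define $\nu^\ep \in \cP(\T^n \times \R^n)$ by $\iint \varphi \, d\nu^\ep = \int_0^1 \int_{\T^n} \varphi(x, Du_2^\ep) \sig^\ep \, dx \, dt$ for $\varphi \in C_c(\T^n \times \R^n)$; by the gradient bound, $\supp \nu^\ep$ stays in a fixed compact set, so along a subsequence $\nu^{\ep_j} \rightharpoonup \nu$. Define $\mu$ by pushing $\nu$ forward through the Legendre transform $v = D_pH(x,p)$. The crux is to verify $\mu \in \widetilde \cM_V$ in the sense of Definition \ref{def:generalized}. Testing the adjoint equation against $\phi \in C^2(\T^n)$ and letting $\ep_j \to 0$ gives
\[
\iint \bigl(v \cdot D\phi(x) - a_{ij}(x) \phi_{x_ix_j}(x)\bigr)\, d\mu(x,v) = 0,
\]
so $\mu \in \cF_V$; and rewriting the equation for $u_2^\ep$ as $\ep(u_2^\ep)_t + D_pH \cdot Du_2^\ep - \tr(A D^2 u_2^\ep) - \ep^4 \Del u_2^\ep = D_pH \cdot Du_2^\ep - H$, testing against $\sig^\ep$, and passing to the limit produces $\iint L \, d\mu = 0$, so by \eqref{min-viscous} $\mu$ is a minimizer. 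Finally, sending $\ep_j \to 0$ in the key inequality and using $\|u_i^\ep - w_i\|_{\Li} \leq C\ep$ gives $w_1(x_0) - w_2(x_0) \leq \iint(w_1 - w_2)\, d\mu \leq 0$, which is the claim since $x_0$ is arbitrary.

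The main obstacle is the Lipschitz estimate for $u_i^\ep$: because $A$ may be genuinely degenerate, the gradient bound cannot come from the $A$-diffusion and must rely on the Hamiltonian growth in (H2') through a Bernstein argument that uses the $\ep^4\Del$ regularization to close. The identification of $\mu$ as a generalized Mather measure is the other delicate point, and it is precisely the symmetric divergence form of the adjoint second-order term $(a_{ij}\sig^\ep)_{x_ix_j}$ (requiring $a_{ij} \in C^2$ from (H3)) that produces the correct $a_{ij}\phi_{x_ix_j}$ term in the holonomy identity defining $\cF_V$.
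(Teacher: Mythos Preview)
Your proposal is correct and follows essentially the same nonlinear adjoint argument as the paper: the same regularized Cauchy problems, the same adjoint equation with terminal Dirac mass, the same convexity inequality and monotonicity of $\int(u_1^\ep-u_2^\ep)\sig^\ep$, and the same passage to the limit via the occupation measures $\nu^\ep$. You in fact spell out the verification that the limiting $\mu$ lies in $\widetilde\cM_V$ (holonomy from testing the adjoint equation, minimality from rewriting the equation for $u_2^\ep$) in more detail than the paper, which defers that step to \cite{MT6,LMT}.
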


\begin{proof}
We basically repeat the proof of Theorem \ref{thm:uniqueness}.

For each $k=1,2$ and each $\ep>0$,
let $u_k^\ep$ be the solution to the Cauchy problem
\begin{equation*} \label{VC-ep}
\begin{cases}
\ep(u^{\ep}_k)_t + H(x,Du^{\ep}_k) = a_{ij}(u_k^\ep)_{x_ix_j}+\ep^4\Del u^{\ep}_k \qquad &\text { in } \T^n \times (0,1),\\
u^{\ep}_k(x,0) = w_k(x) \qquad &\text{ on } \T^n.
\end{cases}
\end{equation*}
Without the viscosity $\ep^4 \Del u^{\ep}_k $, \eqref{C-ep} becomes
\begin{equation} \label{VC-0}
\begin{cases}
\ep(u_k)_t + H(x,Du_k) = a_{ij}(u_k)_{x_ix_j} \qquad &\text { in } \T^n \times (0,1),\\
u_k(x,0) = w_k(x) \qquad &\text{ on } \T^n,
\end{cases}
\end{equation}
It is clear that the unique viscosity solution to \eqref{VC-0} is $u_k(x,t) = w_k(x)$ for all $(x,t) \in \T^n \times [0,1)$
because of the fact that $w_k$ is a solution to (VE).
Thanks to (H2') (see \cite[Theorem 4.5]{LMT} for instance),
there exists $C>0$ independent of $\ep$ such that
\begin{equation}\label{grad-bound}
\|Du^\ep_i\|_{\Li(\T^n \times (0,1))} \leq C \quad \text{and} \quad
\|u_i^{\ep}- w_i\|_{\Li(\T^n\times(0,1))}\le C\ep.
\end{equation}
As above, we use $u_1^\ep, u_2^\ep$ to deduce the conclusion as $\ep \to 0$.

For any $x_0 \in \T^n$, let $\sig^\ep$ be the solution to 
\begin{equation*}
\begin{cases}
-\ep \sig^\ep_t -\text{div}(D_p H(x,Du^\ep_2) \sig^\ep) = 
(a_{ij}\sig^{\ep})_{x_ix_j}+\ep^4\Del \sig^{\ep} \qquad &\text { in } \T^n \times (0,1),\\
\sig^\ep(x,1)= \del_{x_0} \qquad &\text{ on } \T^n.
\end{cases}
\end{equation*}
Here $\del_{x_0}$ is the Dirac delta mass at $x_0$.

By convexity of $H$, we have
\[
\ep(u_1^\ep - u_2^\ep)_t + D_pH(x,Du_2^\ep)\cdot D(u_1^\ep-u_2^\ep) \leq 
a_{ij}(u_1^\ep-u_2^\ep)_{x_ix_j}+\ep^4\Del (u_1^\ep-u_2^\ep).
\]
Multiply this by $\sig^\ep$ and integrate on $\T^n$ to yield
\[
\frac{d}{dt} \int_{\T^n} (u_1^\ep - u_2^\ep) \sig^\ep \,dx \leq 0.
\]
Hence,
\begin{equation}\label{u-ineq}
(u_1^\ep - u_2^\ep)(x_0,1) \leq \int_0^1 \int_{\T^n} (u_1^\ep - u_2^\ep) \sig^\ep \,dxdt.
\end{equation}

Let $\nu^{\ep}\in\cP(\T^n\times\R^n)$ be the measure satisfying 
\begin{equation*}\label{def-mu}
\iint_{\T^n\times\R^n}\varphi(x,p)\,d\nu^{\ep}(x,p)
=
\int_{0}^{1}\int_{\T^n}\varphi(x,Du_2^\ep)\sig^{\ep}\,dxdt\quad
\text{for all} \ \varphi\in C_{c}(\T^n\times\R^n). 
\end{equation*}
Then,  \eqref{u-ineq} becomes
\begin{equation}\label{ineq-1}
(u_1^\ep - u_2^\ep)(x_0,1) \leq \iint_{\T^n\times\R^n} (u_1^\ep - u_2^\ep) \,d\nu^{\ep}(x,p). 
\end{equation}

Thanks to \eqref{grad-bound}, we have that $\text{supp}(\nu^\ep) \subset \T^n \times \ol{B}(0,C)$.
There exists $\{\ep_{j}\}\to 0$ such that $\nu^{\ep_j}\rightharpoonup \nu\in\cP(\T^n\times\R^n)$ 
as $j\to\infty$ weakly in the sense of measures. 
We set $\mu \in \cP(\T^n \times \R^n)$ be such that
\begin{equation*}\label{def-mu-nu}
\iint_{\T^n \times \R^n} \varphi(x,p) d\nu(x,p) = \iint_{\T^n \times \R^n} \varphi(x,D_v L(x,v))\,d\mu(x,v).
\end{equation*}
Note that $\mu$ is a generalized Mather measure defined in Definition \ref{def:generalized}.  
We refer to \cite[Proposition 2.3]{MT6} or \cite[Proposition 6.11]{LMT} for the details.

Sending $j \to \infty$ in \eqref{ineq-1} and using \eqref{grad-bound} to yield
\[
w_1(x_0) -w_2(x_0) \leq    \iint_{\T^n\times\R^n} (w_1  - w_2)\,d\mu(x,v) \leq 0.
\qedhere
\]
\end{proof}

Let $\cM_V$ be the generalized projected Mather set on $\T^n$, that is,
\[
\cM_V = \overline{\bigcup_{\mu \in \widetilde \cM_V} \text{supp} \left(\text{proj}_{\T^n} \mu\right)}.
\]
Theorem \ref{thm:uniqueness2} gives the following straightforward result.
\begin{cor}
Assume {\rm (H1), (H2'), (H3)}.
Let $w_1, w_2$ be two continuous viscosity solutions of ergodic problem {\rm (VE)}.
Assume further that $w_1 \leq w_2$ on $\cM_V$.
Then $w_1 \leq w_2$ in $\T^n$.
\end{cor}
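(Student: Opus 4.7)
The plan is to reduce this corollary to Theorem \ref{thm:uniqueness2} by checking the integral inequality hypothesis for every generalized Mather measure. Concretely, fix any $\mu \in \widetilde \cM_V$. By the very definition of $\cM_V$, the support of the pushforward $\text{proj}_{\T^n} \mu$ is contained in $\cM_V$, so
\[
\iint_{\T^n \times \R^n} (w_1(x)-w_2(x))\, d\mu(x,v) = \int_{\cM_V} (w_1 - w_2)\, d(\text{proj}_{\T^n}\mu).
\]
Since $w_1 \le w_2$ on $\cM_V$ by hypothesis and both functions are continuous (hence the integrand is a nonpositive Borel function on the support), the right-hand side is nonpositive.

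This gives exactly the hypothesis of Theorem \ref{thm:uniqueness2}, namely
\[
\iint_{\T^n \times \R^n} w_1(x)\, d\mu(x,v) \le \iint_{\T^n \times \R^n} w_2(x)\, d\mu(x,v)\qquad \text{for all } \mu\in \widetilde\cM_V.
\]
Applying that theorem yields $w_1 \le w_2$ on all of $\T^n$, which is the desired conclusion.

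There is essentially no obstacle here; the only thing to verify is that passing from a pointwise inequality on $\cM_V$ to the weighted integral inequality is legitimate, which is immediate from the containment of supports and continuity of $w_1$ and $w_2$. The closure in the definition of $\cM_V$ does not cause any issue because the supports of the projected measures are themselves closed and contained in the union before taking closure, so the pointwise bound on $\cM_V$ suffices.
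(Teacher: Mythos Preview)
Your argument is correct and matches the paper's intended approach: the paper simply states that the corollary is a straightforward consequence of Theorem \ref{thm:uniqueness2}, and your verification that the integral hypothesis follows from the pointwise inequality on $\cM_V$ (via the containment of the projected supports) is exactly the implicit reasoning.
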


\begin{thebibliography}{30} 

\bibitem{AT}
S. N. Armstrong, H. V. Tran,
\emph{Viscosity solutions of general viscous Hamilton-Jacobi equations},
Math. Ann. 361, 2015, no. 3-4, 647--687.

\bibitem{BS}
G. Barles, P. E. Souganidis, 
\emph{On the large time behavior of solutions of Hamilton--Jacobi equations}, 
SIAM J. Math. Anal. {\bf 31} (2000), no. 4, 925--939.

\bibitem{CGMT}
F. Cagnetti, D. Gomes, H. Mitake, H. V. Tran, 
\emph{A new method for large time behavior of convex Hamilton-Jacobi equations: degenerate equations and weakly coupled systems}, 
Ann. Inst. H. Poincare Anal. Non Lineaire. 32 (2015), 183--200.

\bibitem{DS}
A. Davini, A. Siconolfi, 
\emph{A generalized dynamical approach to the large time behavior of solutions of Hamilton-Jacobi equations}, 
SIAM J. Math. Anal. 38 (2006), no. 2, 478--502.

\bibitem{Ev1}
L. C. Evans,
\emph{Adjoint and compensated compactness methods for Hamilton-Jacobi PDE}, 
Arch. Rat. Mech. Anal. {\bf 197} (2010), 1053--1088.

\bibitem{F1}
A. Fathi, 
\emph{Sur la convergence du semi-groupe de Lax-Oleinik}, 
C. R. Acad. Sci. Paris Ser. I Math. 327 (1998), no. 3, 267--270. 

\bibitem{FaB}
A. Fathi,
 Weak KAM Theorem in Lagrangian Dynamics.

\bibitem{FS}
A. Fathi, A. Siconolfi, 
\emph{PDE aspects of Aubry-Mather theory for quasiconvex Hamiltonians}, 
Calc. Var. Partial Differential Equations 22 (2005), no. 2, 185--228.

\bibitem{IMT1}
H. Ishii, H. Mitake, H. V. Tran, 
\emph{The vanishing discount problem and viscosity Mather measures. Part 1: the problem on a torus},
{J. Math. Pures Appl. (9)},   108 (2017), no. 2, 125--149.

\bibitem{LMT}
N. Q. Le, H. Mitake, H.V. Tran,
Dynamical and Geometric Aspects of Hamilton-Jacobi and Linearized Monge-Amp\`ere Equations,
Lecture Notes in Mathematics 2183, Springer.

\bibitem{LPV}  
P.-L. Lions, G. Papanicolaou, S. R. S. Varadhan,  
\emph{Homogenization of Hamilton-Jacobi equations}, 
unpublished work (1987). 

\bibitem{Man}
R. Ma\~n\'e, 
\emph{Generic properties and problems of minimizing measures of Lagrangian systems}.  Nonlinearity 9 (1996), no. 2, 273--310. 

\bibitem{M}
J. N. Mather, 
\emph{Action minimizing invariant measures for positive definite Lagrangian systems}, 
Math. Z. 207 (1991), no. 2, 169--207.  

\bibitem{MT6}
H. Mitake, H. V. Tran, 
\emph{Selection problems for a discount degenerate viscous Hamilton--Jacobi equation },
Adv. Math., {\bf306} (2017), 684--703.

\bibitem{T1}
H. V. Tran, 
\emph{Adjoint methods for static Hamilton-Jacobi equations},
 Calculus of Variations and PDE {\bf 41} (2011), 301--319. 
\end {thebibliography}

\end{document}